\newtheorem{definition}{Definition}[section]
\newtheorem{lemma}{Lemma}[section]
\newtheorem{proposition}{Proposition}[section]
\newtheorem{theorem}{Theorem}[section]
\newtheorem{corollary}{Corollary}[section]
\title{Fixed-Point Traps and Identity Emergence in Educational Feedback Systems}
\author{Faruk Alpay\thanks{Independent Researcher, ORCID: \href{https://orcid.org/0009-0009-2207-6528}{0009-0009-2207-6528}}}
\date{\today}
\begin{document}

\maketitle

\begin{abstract}
I present a categorical framework for analyzing fixed-point emergence in educational feedback systems, where exam-grade collapse mechanisms prevent the formation of stable learner identities. Using endofunctors and initial algebras from category theory, I model learning processes as generative functors $\varphi$ that would naturally converge to fixed-point identities under transfinite iteration. However, when educational assessment introduces entropy-reducing collapse functors $E$, I prove that no nontrivial initial algebra can exist for the composite functor $F = E \circ \varphi$. This mathematical obstruction categorically blocks creativity-driven identity emergence, creating what I term "fixed-point traps" in educational systems. My results demonstrate that exam-driven feedback loops fundamentally prevent the stabilization of learner identities, offering a formal explanation for creativity suppression in assessment-heavy educational environments.
\end{abstract}

\textbf{Keywords:} Explainable AI, Category Theory, Fixed-Point Algebra, Exam System Collapse, Education Trap, Grading Paradox, Categorical Identity, Symbolic Mathematics, Creativity Suppression, Alpay Algebra, $\varphi$-Algebra, Transfinite Fixed Point, Collapse Morphism, University Failure, Anti-Creativity Structures, Observer Collapse, Research Inhibition, Exam-Driven Learning, Categorical Obstruction, Emergence Blocking

\textbf{MSC 2020:} 
18A15 (Foundations and basic properties of categories),  
18C15 (Monads and comonads),  
91D30 (Social choice theory),  
97C70 (Teaching-learning processes),  
03B70 (Logic in computer science),  
68T01 (Foundations of artificial intelligence)

\textbf{ACM Classification:} 
F.4.1 [Mathematical Logic and Formal Languages]: Mathematical Logic—Categorical algebra;  
I.2.0 [Artificial Intelligence]: General—Cognitive modeling;  
K.3.2 [Computers and Education]: Computer and Information Science Education—Symbolic systems, Feedback architectures

\newpage

\section{Preliminaries and Notation}

\begin{definition}[Category and Identity Morphisms]
A \textit{category} $\mathcal{C}$ consists of a class of objects $\mathrm{Ob}(\mathcal{C})$, a class of morphisms (arrows) $\mathrm{Hom}(\mathcal{C})$, source and target maps assigning to each morphism $f$ its domain and codomain, a composition law, and for each object $X$ an identity morphism $\mathrm{id}_X$ satisfying $\mathrm{id}_X\circ f=f$ and $g\circ \mathrm{id}_X=g$ whenever composable. These data satisfy associativity of composition (see \cite{MacLane1971}).
\end{definition}

\begin{definition}[Endofunctor and $\varphi$-Algebra]
Let $\mathcal{C}$ be a category. An \textit{endofunctor} $\varphi:\mathcal{C}\to\mathcal{C}$ assigns to each object $X$ an object $\varphi(X)$ and to each morphism $f:X\to Y$ a morphism $\varphi(f):\varphi(X)\to\varphi(Y)$, preserving identities and composition. A \textit{$\varphi$-algebra} is a pair $(X,\alpha)$ where $X\in\mathrm{Ob}(\mathcal{C})$ and $\alpha:\varphi(X)\to X$ is a morphism in $\mathcal{C}$. A morphism of $\varphi$-algebras $(X,\alpha)\to(Y,\beta)$ is a map $h:X\to Y$ with $h\circ\alpha=\beta\circ\varphi(h)$. An \textit{initial $\varphi$-algebra} is a $\varphi$-algebra $(\mu_\varphi,\iota)$ such that for every $(X,\alpha)$ there is a unique $\varphi$-algebra homomorphism $(\mu_\varphi,\iota)\to(X,\alpha)$.
\end{definition}

\begin{definition}[Fixed-Point Object as Identity]
The \textit{transfinite $\varphi$-chain} starting from an initial object $0$ (assumed in $\mathcal{C}$) is defined by $X_0=0$, $X_{n+1}=\varphi(X_n)$, and for limit ordinals $\lambda$, $X_\lambda=\mathrm{colim}_{\gamma<\lambda}X_\gamma$. If this chain converges at stage $\Lambda$ so that $X_\Lambda\cong \varphi(X_\Lambda)$, the object $X_\Lambda$ is called the \textit{fixed-point object} or \textit{initial fixed point} of $\varphi$. We denote this object by $\mu_\varphi$. By definition $\mu_\varphi\cong\varphi(\mu_\varphi)$, so $\mu_\varphi$ is a (least) fixed point of $\varphi$ and is regarded as the categorical \textit{identity} of the generative process $\varphi$.
\end{definition}

\begin{lemma}[Lambek's Lemma]
If $(\mu_\varphi,\iota)$ is an initial $\varphi$-algebra, then the structure map $\iota:\varphi(\mu_\varphi)\to \mu_\varphi$ is an isomorphism. In particular, $\mu_\varphi$ is a fixed point of $\varphi$ (up to isomorphism). Equivalently, any initial $\varphi$-algebra is in fact a fixed-point algebra \cite{MacLane1971}.
\end{lemma}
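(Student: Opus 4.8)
The plan is to produce an explicit two-sided inverse for $\iota$ by playing the universal property of the initial algebra against one carefully chosen auxiliary $\varphi$-algebra. The only raw material available is initiality, so the entire argument should reduce to uniqueness statements about homomorphisms emanating from $(\mu_\varphi,\iota)$, with no appeal to any internal structure of $\mathcal{C}$.

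First I would apply the endofunctor $\varphi$ to the structure map $\iota:\varphi(\mu_\varphi)\to\mu_\varphi$ to obtain $\varphi(\iota):\varphi(\varphi(\mu_\varphi))\to\varphi(\mu_\varphi)$. This exhibits the pair $(\varphi(\mu_\varphi),\varphi(\iota))$ as a $\varphi$-algebra in its own right. By initiality of $(\mu_\varphi,\iota)$ there is then a unique $\varphi$-algebra homomorphism $h:(\mu_\varphi,\iota)\to(\varphi(\mu_\varphi),\varphi(\iota))$, which by definition satisfies the compatibility equation $h\circ\iota=\varphi(\iota)\circ\varphi(h)$. My candidate inverse for $\iota$ is precisely this map $h$.

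Next I would record the structural observation that makes everything collapse: the morphism $\iota$ is itself a homomorphism of $\varphi$-algebras $(\varphi(\mu_\varphi),\varphi(\iota))\to(\mu_\varphi,\iota)$, since the required equation $\iota\circ\varphi(\iota)=\iota\circ\varphi(\iota)$ holds trivially. Consequently the composite $\iota\circ h$ is a $\varphi$-algebra endomorphism of $(\mu_\varphi,\iota)$. But $\mathrm{id}_{\mu_\varphi}$ is also such an endomorphism, and initiality permits only one endomorphism of the initial algebra, forcing $\iota\circ h=\mathrm{id}_{\mu_\varphi}$.

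For the reverse composite I would feed this identity back through the compatibility equation: functoriality yields $h\circ\iota=\varphi(\iota)\circ\varphi(h)=\varphi(\iota\circ h)=\varphi(\mathrm{id}_{\mu_\varphi})=\mathrm{id}_{\varphi(\mu_\varphi)}$. Hence $h$ is a genuine two-sided inverse of $\iota$, so $\iota$ is an isomorphism and $\mu_\varphi\cong\varphi(\mu_\varphi)$ as claimed. The one genuinely non-mechanical step — the place where I expect a reader to pause — is the choice of the auxiliary algebra $(\varphi(\mu_\varphi),\varphi(\iota))$ together with the recognition that $\iota$ plays two roles at once, serving both as its own carrier's structure map and as a homomorphism back to the initial algebra. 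Once that double role is spotted, both inverse identities fall out purely from the uniqueness clause in initiality.
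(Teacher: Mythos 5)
Your proof is correct and is exactly the classical Lambek argument; the paper itself offers no proof of this lemma, deferring entirely to the citation \cite{MacLane1971}, and your argument is precisely the standard one that citation points to. Every step checks out: the auxiliary algebra $(\varphi(\mu_\varphi),\varphi(\iota))$, the trivial verification that $\iota$ is a $\varphi$-algebra homomorphism back to $(\mu_\varphi,\iota)$, the uniqueness of endomorphisms of the initial algebra forcing $\iota\circ h=\mathrm{id}_{\mu_\varphi}$, and the functoriality computation $h\circ\iota=\varphi(\iota\circ h)=\mathrm{id}_{\varphi(\mu_\varphi)}$.
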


\newpage

\section{Exam-Grade Collapse Systems}

\begin{definition}[Fold/Collapse Morphism]
Let $\mathcal{C}$ be as above, endowed with a complexity measure $h:\mathrm{Ob}(\mathcal{C})\to \mathrm{Ord}$ (an "entropy"). A morphism $f:X\to Y$ in $\mathcal{C}$ is called a \textit{fold} or \textit{entropy-reducing collapse} if $f$ is an epimorphism (surjective on structure) that is not invertible, and $h(Y)<h(X)$ (so $f$ identifies distinct substructures of $X$ in $Y$). For example, in the observer-coupled collapse of \cite{AlpayIII}, the perturbed identity contains two copies of the core identity and thus is collapsed back to the original; such a canonicalization is a fold.
\end{definition}

\begin{definition}[Exam-Grade Collapse System]
An \textit{Exam-Grade Collapse System} (EGCS) consists of a category $\mathcal{C}$ (with an initial object) together with:

\begin{itemize}[itemsep=0.5\baselineskip]
    \item a generative endofunctor $\varphi:\mathcal{C}\to\mathcal{C}$ (modeling creative learning/update), and
    \item a collapse endofunctor $E:\mathcal{C}\to\mathcal{C}$ (modeling the exam+grading process), plus a natural transformation $\varepsilon:\varphi\Rightarrow E\circ\varphi$ whose component $\varepsilon_X:\varphi(X)\to E(\varphi(X))$ is a fold for every object $X$.
\end{itemize}

Thus each examination step applies $\varphi$ and then collapses via $E$, strictly reducing entropy. We require $h(E(\varphi(X)))<h(\varphi(X))$ for all $X$, so the exam functor $E$ always maps states into strictly "lower-entropy" subspaces.
\end{definition}

\begin{proposition}[Entropy Reduction]
In an EGCS, for every object $X$ the composite morphism
\[
\varphi(X) \xrightarrow{\varepsilon_X} E(\varphi(X))
\]
is a fold and hence $h(E(\varphi(X)))<h(\varphi(X))$. In particular each exam collapses the state space.
\end{proposition}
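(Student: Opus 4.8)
The plan is to prove this entirely by unwinding the definitions that constitute an EGCS, since the statement is a direct consequence of the data packaged into the definitions of an Exam-Grade Collapse System and of a Fold/Collapse Morphism. First I would fix an arbitrary object $X\in\mathrm{Ob}(\mathcal{C})$ and observe that, by the defining structure of an EGCS, the natural transformation $\varepsilon:\varphi\Rightarrow E\circ\varphi$ is required to have each component $\varepsilon_X:\varphi(X)\to E(\varphi(X))$ be a fold. This is precisely the first assertion of the proposition, so it is discharged simply by invoking the hypothesis; no construction is needed.

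Next I would extract the entropy component from the definition of a fold. Any morphism $f:A\to B$ that qualifies as a fold satisfies, among other conditions (epimorphism, non-invertibility), the inequality $h(B)<h(A)$, where $h:\mathrm{Ob}(\mathcal{C})\to\mathrm{Ord}$ is the fixed entropy measure. Instantiating $A=\varphi(X)$ and $B=E(\varphi(X))$ against the fact that $\varepsilon_X$ is a fold yields immediately $h(E(\varphi(X)))<h(\varphi(X))$, which is the second assertion. Since $X$ was arbitrary, the strict entropy decrease holds at every examination step, which is the intended "collapse" reading. Note that only the entropy clause of the fold definition is used here; the epimorphism and non-invertibility clauses are not needed for this particular conclusion.

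I expect no genuine obstacle, as both conclusions are definitional in character. The one point worth flagging is a consistency remark rather than a difficulty: the EGCS definition already imposes $h(E(\varphi(X)))<h(\varphi(X))$ as an explicit axiom, so the proposition's second conclusion is formally redundant with that requirement. I would present this redundancy as a deliberate consistency check confirming that the fold condition on $\varepsilon_X$ and the stated entropy requirement are mutually compatible. Finally, I would remark that naturality of $\varepsilon$ plays no role in the argument: everything is pointwise in $X$ and uses only the single component $\varepsilon_X$, so the result holds object-by-object independently of how the components cohere under the morphisms of $\mathcal{C}$.
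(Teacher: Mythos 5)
Your proposal is correct and matches the paper's own proof in approach: both are a direct unwinding of Definitions 2.1 and 2.2, with the paper citing the explicit EGCS entropy axiom for the inequality while you equivalently read it off the entropy clause of the fold definition---a redundancy you rightly flag, since the EGCS definition imposes $h(E(\varphi(X)))<h(\varphi(X))$ independently. Your added remarks (that naturality of $\varepsilon$ is unused and the argument is purely pointwise) are accurate but do not change the substance.
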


\begin{proof}
By Definition 2.2, $\varepsilon_X$ is an epi that is not invertible, so by definition a fold. By assumption $h(E(\varphi(X)))<h(\varphi(X))$. $\square$
\end{proof}

\newpage

\section{Fixed-Point Trap in EGCS}

\begin{theorem}[Nonexistence of Nontrivial Fixed-Point]
In an EGCS, the composite functor $F=E\circ\varphi$ admits no nontrivial initial algebra. Equivalently, there is no object $X\not\cong 0$ satisfying $X\cong F(X)$ (fixed point) except the trivial initial object.
\end{theorem}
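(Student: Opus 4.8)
The plan is to reduce the statement to the well-foundedness of the ordinals and then contradict it using the strict entropy drop built into every examination step. First I would invoke Lambek's Lemma to dispose of the ``initial algebra'' formulation: if $(\mu_F,\iota)$ were an initial $F$-algebra, then $\iota:F(\mu_F)\to\mu_F$ would be an isomorphism, so $\mu_F\cong F(\mu_F)$. Hence it suffices to prove the object-level assertion that every $X$ with $X\cong F(X)$ is isomorphic to the initial object $0$. I would record at the outset the standing convention that the complexity measure $h$ is invariant under isomorphism (isomorphic objects carry the same entropy), since this is what lets a categorical isomorphism be converted into an ordinal equality.

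Next I would assume toward a contradiction that $X\not\cong 0$ satisfies $X\cong F(X)=E(\varphi(X))$. Applying iso-invariance of $h$ to this isomorphism gives the equality $h(X)=h(E(\varphi(X)))$. On the other hand, the component $\varepsilon_X:\varphi(X)\to E(\varphi(X))$ is by hypothesis a fold, so Proposition 2.1 yields the strict inequality $h(E(\varphi(X)))<h(\varphi(X))$. Chaining these, I obtain $h(X)<h(\varphi(X))$: the fixed point sits strictly below its own $\varphi$-image in the entropy order.

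The engine of the contradiction is that $\mathrm{Ord}$ admits no infinite strictly descending chain. Because $X\cong F(X)$ and $F$ is a functor, the isomorphism transports to $X\cong F^{n}(X)$ for every $n$, and I would use the fold $\varepsilon$ at each stage to peel off one strict entropy decrease, producing a sequence of ordinals $h(X)>\beta_1>\beta_2>\cdots$ that strictly descends forever, which is impossible. Equivalently, reading the construction of $\mu_F$ as the transfinite $F$-chain $X_0=0,\ X_{\alpha+1}=F(X_\alpha)$, each successor step strictly lowers entropy via its fold, so the chain cannot stabilize at any nonminimal stage; the only stage at which it can rest is the bottom of the entropy order, forcing the limiting fixed point to be $0$. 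Either way the nonminimal fixed point is excluded.

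I expect the main obstacle to be precisely the bookkeeping in that descending-chain step: turning the single local inequality $h(X)<h(\varphi(X))$ into a genuinely descending sequence requires controlling how $\varphi$ moves entropy across iterations, so I would make explicit the regularity assumption that $\varphi$ does not raise the complexity measure on the relevant objects, equivalently that the composite $F=E\circ\varphi$ is strictly entropy-reducing, $h(F(X))<h(X)$ for $X\not\cong 0$. Under that hypothesis the argument collapses to a one-line contradiction, $h(X)=h(F(X))<h(X)$; without it one must argue more carefully that no limit stage of the chain can reintroduce the lost entropy. Isolating and justifying this monotonicity condition, and reconciling it with the ``generative'' reading of $\varphi$, is the delicate point I would treat most carefully.
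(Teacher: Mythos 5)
Your skeleton coincides with the paper's own proof: invoke Lambek's Lemma to convert an initial $F$-algebra into an isomorphism $\mu\cong F(\mu)=E(\varphi(\mu))$, then play this against the strict entropy drop of the fold $\varepsilon_\mu$. But you diverge at the decisive step, and you are right to. The paper derives $h(E(\varphi(\mu)))<h(\varphi(\mu))$, correctly infers $\varphi(\mu)\not\cong E(\varphi(\mu))$ (silently using the isomorphism-invariance of $h$ that you state explicitly), and then asserts that ``$E(\varphi(\mu))$ cannot be isomorphic to $\mu$ unless $\mu\cong 0$.'' That is a non sequitur: the established inequality compares $F(\mu)$ with $\varphi(\mu)$, not with $\mu$, and all that follows from $\mu\cong F(\mu)$ is $h(\mu)<h(\varphi(\mu))$ --- exactly your inequality, which is no contradiction at all. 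Indeed it is precisely what one expects when a ``generative'' $\varphi$ raises entropy and $E$ collapses it back to where it started. Your diagnosis is therefore correct: the theorem needs the additional monotonicity hypothesis $h(F(X))<h(X)$ for $X\not\cong 0$ (equivalently, that $\varphi$ cannot raise entropy by as much as the fold removes), and with it the whole proof is the one-liner $h(X)=h(F(X))<h(X)$.

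Two refinements to your own text. First, drop the infinite-descent scaffolding rather than repair it: $X\cong F^{n}(X)$ forces $h(F^{n}(X))=h(X)$ for every $n$, so no strictly descending sequence of the values $h(F^{n}(X))$ can exist without the per-step hypothesis $h(F(Y))<h(Y)$, at which point descent is redundant --- as you concede in your final paragraph, so this is presentational. Second, and more importantly, your suspicion that the unpatched statement fails can be made concrete: take $\mathcal{C}$ to be finite $\mathbb{Z}_2$-sets with $h$ given by cardinality, $\varphi(X)=X\times\mathbb{Z}_2$, and $E$ the orbit functor $Y\mapsto Y/\mathbb{Z}_2$. Then each orbit map $\varepsilon_X\colon\varphi(X)\to E(\varphi(X))\cong X$ is natural, epimorphic, non-invertible, and strictly entropy-reducing ($|X|<2|X|$) for every nonempty $X$, yet $E\circ\varphi\cong\mathrm{id}$, so \emph{every} nonempty $X$ is a nontrivial fixed point of $F$. (The fold axiom degenerates at the initial object, where $\varphi(\emptyset)=\emptyset$; but Definition 2.2's demand of a fold at every object, including $0$, is already problematic for any example with $\varphi(0)=0$ and entropy minimized at $0$, so this defect belongs to the paper's axioms, not to the counterexample.) The net assessment: your conditional argument is the correct repair of a proof that, as written in the paper, establishes only $\varphi(\mu)\not\cong F(\mu)$ where it needs $\mu\not\cong F(\mu)$; the gap you isolated is genuine and lies in the paper, with the admitted cost that your added hypothesis sits uneasily with the intended ``generative'' (entropy-increasing) reading of $\varphi$.
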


\begin{proof}
Suppose for contradiction that $(\mu,\iota)$ is an initial $F$-algebra, i.e.\ $\iota:F(\mu)=E(\varphi(\mu))\to \mu$ is initial. By Lambek's lemma, $\iota$ is an isomorphism and $\mu\cong F(\mu)=E(\varphi(\mu))$. But since $\varepsilon_\mu:\varphi(\mu)\to E(\varphi(\mu))$ is a fold, $E(\varphi(\mu))$ has strictly lower entropy than $\varphi(\mu)$. Thus $\varphi(\mu)\not\cong E(\varphi(\mu))$ unless $\mu$ is degenerate (initial). In particular, $E(\varphi(\mu))$ cannot be isomorphic to $\mu$ unless $\mu\cong 0$. Hence the assumed isomorphism $\mu\cong E(\varphi(\mu))$ fails. This contradiction shows no nontrivial initial $F$-algebra can exist. $\square$
\end{proof}

\begin{corollary}[Identity Emergence Blocked]
Because no initial $F$-algebra exists in the EGCS, there is no emergent "identity object" $\mu_F$ that is fixed by $F$. In particular, the process $\varphi$ cannot produce its own identity via transfinite iteration once the exam collapse is enforced. Equivalently, there is no universal fixed-point object in $\mathcal{C}$ under $F$, so the category of $F$-algebras has no terminal/initial object to play the role of a stabilized identity.
\end{corollary}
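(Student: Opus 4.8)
The plan is to obtain the Corollary as a direct consequence of Theorem 3.1, reading each of its three formulations as a restatement of ``no nontrivial fixed point of $F$'' and supplying the single nonformal link --- the passage from transfinite iteration to an honest fixed point. Throughout I would keep the entropy measure $h$ in reserve as the engine that powers the reduction.

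First I would dispatch the ``identity object'' formulation. By Definition 1.3 an emergent identity $\mu_F$ is precisely an object with $\mu_F\cong F(\mu_F)$, i.e. a fixed point of $F$. Theorem 3.1 asserts that the only such fixed point is the trivial initial object $0$, so no nontrivial $\mu_F$ can exist; this step is a one-line citation and needs no new argument. Next, for the transfinite-iteration claim I would form the $F$-chain $X_0=0$, $X_{n+1}=F(X_n)$, with $X_\lambda=\mathrm{colim}_{\gamma<\lambda}X_\gamma$ at limits, exactly as in Definition 1.3 but with $F=E\circ\varphi$ in place of $\varphi$. Were this chain to converge at a stage $\Lambda$, we would have $X_\Lambda\cong F(X_\Lambda)$, again a fixed point of $F$, and Theorem 3.1 forces $X_\Lambda\cong 0$. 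I would reinforce this with the entropy bookkeeping that already drives the Theorem: the fold $\varepsilon_{X_\Lambda}:\varphi(X_\Lambda)\to E(\varphi(X_\Lambda))$ gives $h(F(X_\Lambda))=h(E(\varphi(X_\Lambda)))<h(\varphi(X_\Lambda))$, so the closure $X_\Lambda\cong F(X_\Lambda)$ can never be realized at positive entropy. Hence $\varphi$ cannot manufacture its own identity once $E$ is inserted into the loop.

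Finally, the assertion that $\mathrm{Alg}(F)$ has no initial object to serve as a stabilized identity is, modulo the word ``nontrivial'', verbatim Theorem 3.1, since an initial object of $\mathrm{Alg}(F)$ is by definition an initial $F$-algebra. The only care required is the degenerate algebra carried by $0$: I would observe that a structure map $F(0)=E(\varphi(0))\to 0$ need not exist in $\mathcal{C}$ at all, and that even if it does it cannot act as a stabilized identity, so its exclusion is harmless.

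The hard part, and the one place I expect genuine friction, is the transfinite step: concluding that a \emph{convergent} $F$-chain yields an \emph{initial} algebra really invokes the Ad\'amek initial-chain theorem, whose hypotheses --- cocontinuity of $F$ on the chain up to stage $\Lambda$ --- are not among the EGCS axioms. Rather than strengthen those axioms, I would route around the theorem and argue entirely through $h$: since the ordinal-valued entropy drops strictly at the $\varphi(X)\to E(\varphi(X))$ comparison at every stage, the chain cannot stabilize except at $0$, which is exactly what the Corollary claims.
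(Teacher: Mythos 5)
Your proposal is correct relative to the paper and, at its core, takes the same route: the paper's own proof is essentially a one-liner --- ``immediate from Theorem 3.1 and the definition of identity-as-fixed-point'' --- which then leans on \cite{AlpayII} for the semantics, noting that the identity of $\varphi$ would be the initial algebra arising as the colimit of the transfinite chain, and that without $\mu_F$ the unique homomorphism $!:\mu_F\to X$ realizing the ``generative identity'' on an algebra $(X,\alpha)$ cannot be defined. Where you genuinely add something is the transfinite step: you correctly notice that deducing ``the $F$-chain cannot converge'' from ``no initial $F$-algebra'' would require Ad\'amek-style cocontinuity hypotheses that are not among the EGCS axioms, a subtlety the paper silently elides, and you avoid it by invoking the fixed-point formulation that Theorem 3.1 explicitly states (convergence at stage $\Lambda$ means $X_\Lambda\cong F(X_\Lambda)$ by Definition 1.3, which the theorem excludes unless $X_\Lambda\cong 0$); that citation is the load-bearing step and it suffices. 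One caveat on your entropy ``reinforcement'' and your proposed route around Ad\'amek: the fold only gives the within-stage comparison $h(X_{\gamma+1})=h(E(\varphi(X_\gamma)))<h(\varphi(X_\gamma))$, and nothing in the axioms relates $h(\varphi(X_\gamma))$ to $h(X_\gamma)$, so $h$ need not decrease along the chain itself; a fixed point $X_\Lambda\cong F(X_\Lambda)$ would merely satisfy $h(X_\Lambda)<h(\varphi(X_\Lambda))$, i.e.\ $\varphi$ strictly raises entropy there, which is not by itself a contradiction. This is the very same leap the paper makes inside its proof of Theorem 3.1, so your argument is no worse off than the source, but you should treat the entropy bookkeeping as heuristic and let the citation of Theorem 3.1 carry the conclusion. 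Your explicit handling of the degenerate algebra on $0$ (observing that a structure map $E(\varphi(0))\to 0$ need not even exist in $\mathcal{C}$) is a point of care the paper omits entirely.
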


\begin{proof}
Immediate from Theorem 3.1 and the definition of identity-as-fixed-point. Indeed, in ordinary \cite{AlpayII} the identity of $\varphi$ is given by the initial $\varphi$-algebra (the colimit of the chain), whose existence is now precluded. Without $\mu_F$, there is no canonical identity morphism in the emergent algebra of states; in fact, in \cite{AlpayII} it was shown that the unique homomorphism $!: \mu_\varphi \to X$ (for any $\varphi$-algebra $(X,\alpha)$) realizes the "generative identity" on $X$. Since $\mu_F$ does not exist, that identity map cannot be defined. $\square$
\end{proof}

\newpage

\section{\texorpdfstring{$\varphi$-Emergence}{φ-Emergence} and Creativity Blockage}

\begin{definition}[$\varphi$-Emergence of Identity]
We say the endofunctor $\varphi$ exhibits \textit{$\varphi$-emergence} of identity if the transfinite iteration of $\varphi$ converges to an initial algebra $\mu_\varphi$ which serves as its identity (as in \cite{AlpayII}). In the absence of collapse, $\mu_\varphi$ is the unique object with $\varphi(\mu_\varphi)\cong\mu_\varphi$.
\end{definition}

\begin{theorem}[Categorical Blocking of Creativity]
In an EGCS, any $\varphi$-emergence of identity is categorically blocked. That is, even for a "creativity-driven" functor $\varphi$, the required fixed-point object $\mu_\varphi$ cannot form because the exam-induced collapse intervenes. Hence creativity-driven emergence of $\varphi$'s identity is impossible.
\end{theorem}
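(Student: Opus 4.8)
The plan is to derive this theorem as a structural consequence of Theorem 3.1 together with the defining natural transformation $\varepsilon$ of the EGCS. The conceptual pivot is that in an EGCS the generative functor $\varphi$ never acts in isolation: the component $\varepsilon_X:\varphi(X)\to E(\varphi(X))$ is, by hypothesis, a fold at every object, so each creative step $\varphi$ is immediately intercepted by the grading collapse $E$. Consequently the dynamics actually realized by the transfinite iteration is not the $\varphi$-chain of Definition 1.3 but the $F$-chain with successor $X_{n+1}=F(X_n)=E(\varphi(X_n))$. I would make this precise by observing that the fold $\varepsilon_X$ is a non-invertible epimorphism (Definition 2.1), so the information discarded in passing from $\varphi(X)$ to $E(\varphi(X))$ cannot be recovered; there is no section undoing the collapse, hence no way for the iteration to ``re-expand'' and continue as a pure $\varphi$-iteration.

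First I would recall Definition 4.1: $\varphi$-emergence requires the iteration to converge to an object $\mu_\varphi$ that is an initial $\varphi$-algebra satisfying $\varphi(\mu_\varphi)\cong\mu_\varphi$. Next I would argue that for such an object to constitute a genuine emergent identity within the feedback loop it must be invariant under the operative step $F$, i.e.\ $F(\mu_\varphi)\cong\mu_\varphi$: a state not fixed by the full examine-and-grade step is reprocessed at the next round and therefore cannot serve as a stabilized identity. Then I would invoke Theorem 3.1, which forces the only object with $X\cong F(X)$ to be the trivial initial object $0$. Hence $\mu_\varphi\cong 0$, which Corollary 3.2 excludes as a degenerate, non-emergent identity. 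This chain of implications yields the claim: no nontrivial $\varphi$-identity can emerge.

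As an independent reinforcement I would supply a direct entropy obstruction that does not route through $F$-fixed points. Suppose toward contradiction that the iteration stabilizes at some $\mu_\varphi\not\cong 0$ with $\varphi(\mu_\varphi)\cong\mu_\varphi$. Applying the fold $\varepsilon_{\mu_\varphi}$ together with Proposition 2.1 gives $h(E(\varphi(\mu_\varphi)))<h(\varphi(\mu_\varphi))=h(\mu_\varphi)$, so the examination step carries the putative identity strictly downward in entropy. Thus $\mu_\varphi$ is not fixed by the realized step $F$, contradicting stability under the feedback loop and confirming that the collapse categorically intervenes before any $\varphi$-identity can consolidate.

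The main obstacle I expect is the modeling step in the first paragraph: justifying that the transfinite iteration in an EGCS is genuinely governed by $F$ rather than by bare $\varphi$. Mathematically both functors are available, so the argument must lean on the EGCS axioms (Definition 2.2) --- specifically that $\varepsilon$ is a natural transformation whose every component is a non-invertible fold --- to establish that collapse is enforced, not optional, at each successor stage. A secondary technical point is controlling the limit-ordinal stages: I would need to verify that the colimits forming $X_\lambda$ in Definition 1.3 do not circumvent the per-step entropy drop, i.e.\ that a well-defined complexity bound persists through the colimit so that the obstruction propagates to the convergence stage rather than only to individual successor steps.
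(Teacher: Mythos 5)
Your proposal follows essentially the same route as the paper's own proof: both arguments reduce the claim to Theorem 3.1 by observing that the realized dynamics in an EGCS is governed by $F=E\circ\varphi$ rather than bare $\varphi$, and both supplement this with the per-step entropy drop $h(E(\varphi(X)))<h(\varphi(X))$ to argue that no candidate identity can stabilize. If anything you are more careful than the paper, which silently glosses over exactly the two points you flag --- justifying that collapse is enforced at every successor stage, and controlling the limit-ordinal colimits --- so your proposal is a faithful (and slightly more scrupulous) version of the published argument.
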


\begin{proof}
By \cite{AlpayII}, the essence of identity emergence is that under transfinite iteration of $\varphi$, a unique fixed point $\mu_\varphi$ appears as the initial $\varphi$-algebra. But in the EGCS the actual process is controlled by $F=E\circ\varphi$. Theorem 3.1 showed that $F$ has no nontrivial initial algebra, so $\varphi$ cannot reach its would-be fixed point. Concretely, each step $\varphi(X)$ is immediately collapsed by $E$ into a lower-entropy state. By analogy to \cite{AlpayIII}, an "observer-coupled collapse" repeatedly injects redundant structure (copying the identity into itself); here exams play the role of the observer, permanently perturbing and collapsing the learner's state. Thus any candidate identity $\varphi$-algebra never stabilizes. In summary, the universal invariant fixed point of $\varphi$ (the creative identity) is prevented from emerging by the entropy-reducing folds. $\square$
\end{proof}

\textbf{Remark.} My construction and proofs use only standard categorical notions (categories, functors, initial algebras) in the Bourbaki–Mac Lane tradition \cite{Bourbaki1970,MacLane1971}. The key observation is that exam+grading constitute a functorial collapse that violates the usual convergence conditions for initial algebras. This furnishes a purely mathematical fixed-point trap: the system has universal colimits and initial objects, yet every generative chain is pulled into a low-entropy state that lacks a new identity. Consequently, creativity-driven $\varphi$-emergence is categorically obstructed.
\newpage
\section{Conclusion}

We have established a categorical framework demonstrating that exam-grade collapse systems fundamentally obstruct the formation of stable learner identities. The mathematical core lies in proving that composite functors $F = E \circ \varphi$ cannot admit nontrivial initial algebras when the exam functor $E$ enforces entropy-reducing collapses.

This result provides a formal foundation for understanding creativity suppression in assessment-heavy educational environments. The fixed-point trap mechanism shows that repeated examination processes prevent the natural convergence of learning dynamics to stable identity states, suggesting that alternative assessment approaches may be necessary to support creative development.

Future work will extend this framework to analyze specific educational interventions and explore categorical conditions under which identity emergence can be preserved despite evaluative pressures.

\bibliographystyle{alpha}

\end{document}